\DeclareMathAlphabet{\cat}{OT1}{cmss}{m}{sl}
\newtheorem{theorem}{Theorem}[section]
\newtheorem{proposition}[theorem]{Proposition}
\newtheorem{lemma}[theorem]{Lemma}
\theoremstyle{definition}
\newtheorem{remark}[theorem]{Remark}
\newtheorem{example}[theorem]{Example}
\newcommand{\tens}{\otimes}
\newcommand{\ind}{\operatorname{\hspace{0.3mm}ind}}
\newcommand{\Br}{\operatorname{Br}}
\newcommand{\Gal}{\operatorname{Gal}}
\newcommand{\gPGL}{\operatorname{\mathbf{PGL}}}
\newcommand{\ed}{\operatorname{ed}}
\newcommand{\td}{\operatorname{tr.deg}}
\newcommand{\torsor}{\operatorname{\cat{torsors}}}
\newcommand{\cT}{\mathcal T}
\title[A lower bound on the essential dimension of $\gPGL_{4}$ in characteristic $2$] % colontitle
{A lower bound on the essential dimension of $\gPGL_{4}$ in characteristic $2$}
\author
[S. Baek] {Sanghoon Baek}
\address
{Department of Mathematical Sciences, KAIST, 291 Daehak-ro, Yuseong-gu, Daejeon, 305-701, Republic of Korea}
\email {sanghoonbaek@kaist.ac.kr}
\begin{document}

\maketitle

\begin{abstract}
In the present paper, we provide a lower bound of the essential dimension over a field of positive characteristic via Kato's cohomology group, defined by cokernel of a general Artin-Schreier operator. Combining this with Tignol's result on the second trace form of simple algebras of degree $4$, we show that $\ed(\gPGL_{4})\geq 4$ over a field of characteristic $2$.
\end{abstract}

%\tableofcontents

\section{Introduction}\label{intro}

Let $F$ be a field and $\cT : \cat{Fields}/F\to\cat{Sets}$ be a functor from the category $\cat{Fields}/F$ of field extensions over $F$ to the category $\cat{Sets}$ of sets. We shall write $\ed(\cT)$ for the essential dimension of $\cT$.

A way to find lower bounds for the essential dimension of $\cT$ is to give \emph{nontrivial} cohomological invariants associated to $\cT$. More precisely, if there exists a nontrivial cohomological invariant of degree $n$ with values in a torsion Galois module of exponent not divisible by the characteristic of $F$, then $\ed(\cT)\geq n$. This approach produces lower bounds for various algebraic structures \cite[\S12]{Reich}, but may not work when the exponent is a multiple of the characteristic of $F$.

In this paper, we presents a lower bound of the essential dimension based on the cohomological invariants with values in Kato's cohomology group. As an application, we compute the essential dimension of the functor given by the classes of $n$-fold quadratic Pfister forms.

In the last section, we provide our main result, $\ed(\gPGL_{4})\geq 4$ over a field of characteristic $2$ by relating the result in the previous section and Tignol's theorem on the second trace form of simple algebras of degree $4$.

\section{Lower bound via cohomological invariants}

Let $F$ be field of characteristic $p>0$. For any $n\geq 0$, we denote by $\Omega_{F}^{n}$ the $n$th exterior power of the absolute differential module $\Omega_{F}^{1}$. Let $d: \Omega_{F}^{n}\to \Omega_{F}^{n+1}$ be the exterior derivation given by $d(ydx_{1}\wedge \ldots \wedge dx_{n})=dy\wedge dx_{1}\wedge \ldots \wedge dx_{n}$.

We write $H_{p}^{n+1}(F)$ for the cokernel of the homomorphism $\wp: \Omega_{F}^{n}\to \Omega_{F}^{n}/d\Omega_{F}^{n-1}$ defined by 
\[\wp(y\frac{dx_{1}}{x_{1}}\wedge \ldots \wedge \frac{dx_{n}}{x_{n}})=(y^{p}-y)\frac{dx_{1}}{x_{1}}\wedge \ldots \wedge \frac{dx_{n}}{x_{n}}.\]

For a field extension $K/F$, we have a \emph{cohomological functor of degree $n+1$}
\[H:\cat{Fields}/F\to \cat{Abelian\, Groups}\,\, \text{ by } K\mapsto H^{n+1}_{p}(K).\]
A \emph{cohomological invariant} of degree $n+1$ associated to a functor $\cT:\cat{Fields}/F\to \cat{Sets}$ is a morphism of functors $q:\cT\to H$. An invariant $q$ is \emph{nontrivial} if there exists a field extension $K/F$ containing an algebraic closure $\bar{F}$ of $F$ and an element $t\in \cT(K)$ such that $q(K)(t)\neq 0$ in $H(K)$.

\begin{lemma}\label{basiclemma}
Let $F$ be a field of characteristic $p>0$ and let $\cT: \cat{Fields}/F\to \cat{Sets}$ be a functor. If there exists a nontrivial invariant of degree $n+1$ associated to $\cT$, then $ed(\cT)\geq n+1$.
\end{lemma}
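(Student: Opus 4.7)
The plan is a contradiction argument driven by functoriality of $q$. Suppose for contradiction that $\ed(\cT)\leq n$. By the nontriviality assumption, I choose a field extension $K/F$ containing $\bar F$ and an element $t\in\cT(K)$ with $q(K)(t)\neq 0$. By the definition of essential dimension, $t$ descends from some intermediate field $F\subseteq F_{0}\subseteq K$ with $\td_{F}(F_{0})\leq n$; say $t=\res_{F_{0}/K}(t_{0})$ for some $t_{0}\in\cT(F_{0})$. Since $\bar F/F$ is algebraic, replacing $F_{0}$ by the compositum $F_{0}\bar F\subseteq K$ does not change the transcendence degree, so I arrange that $\bar F\subseteq F_{0}$ and $\td_{\bar F}(F_{0})\leq n$.

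The key input is then the vanishing $H^{n+1}_{p}(F_{0})=0$ for every field $F_{0}$ containing an algebraically closed subfield $\bar F$ of characteristic $p$ with $\td_{\bar F}(F_{0})\leq n$. This is the characteristic-$p$ analogue of the classical fact that $H^{n+1}(F_{0},\mu_{\ell}^{\otimes(n+1)})=0$ once the transcendence degree drops below $n+1$. The underlying reason is that $\bar F$ is perfect, so every element of $\bar F$ is a $p$-th power and hence $\Omega^{1}_{\bar F}=0$; for a separably generated $F_{0}/\bar F$ of transcendence degree $d\leq n$, the conormal sequence then collapses to give $\Omega^{1}_{F_{0}}\cong\Omega^{1}_{F_{0}/\bar F}$, a free $F_{0}$-module of rank $d$, whose exterior powers in degrees $>n$ vanish. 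The general (not necessarily separably generated) case reduces to this one via purely inseparable descent, or alternatively by invoking the standard bound on Kato's cohomology for fields containing an algebraically closed subfield. This vanishing step is where I expect the main work to lie; the rest of the argument is purely formal.

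Once this vanishing is in hand, the functoriality of $q$ as a natural transformation $\cT\to H$ closes the argument:
\[
q(K)(t)=q(K)\bigl(\res_{F_{0}/K}(t_{0})\bigr)=\res_{F_{0}/K}\bigl(q(F_{0})(t_{0})\bigr)\in\res_{F_{0}/K}\bigl(H^{n+1}_{p}(F_{0})\bigr)=0,
\]
contradicting $q(K)(t)\neq 0$. Thus $\ed(\cT)\geq n+1$. The only genuine content is the vanishing theorem for $H^{n+1}_{p}(F_{0})$; everything else is built directly into the definitions of essential dimension and of a cohomological invariant.
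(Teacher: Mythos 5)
Your argument has the same skeleton as the paper's proof: reduce to the algebraically closed base $\bar F$, descend $t$ to an intermediate field $F_{0}$ with $\td_{\bar F}F_{0}\leq n$, invoke the vanishing of $H^{n+1}_{p}(F_{0})$, and conclude by functoriality of $q$. The one point that needs correction is your ``underlying reason'' for that vanishing. The group $H^{n+1}_{p}(F_{0})$ is the cokernel of $\wp$ on $\Omega^{n}_{F_{0}}/d\Omega^{n-1}_{F_{0}}$, i.e.\ it is built from $n$-forms, not $(n+1)$-forms, so your dimension count ($\Omega^{1}_{F_{0}}$ free of rank $d$, hence exterior powers above degree $d$ vanish) disposes only of the case $d<n$. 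In the critical case $d=n$ one has $\Omega^{n}_{F_{0}}\neq 0$, and the vanishing of $H^{n+1}_{p}(F_{0})$ is the genuinely nontrivial assertion that the Artin--Schreier operator $\wp$ is surjective modulo exact forms; for $n=1$ this is exactly Tsen's theorem that ${}_{p}\Br(F_{0})=0$ for $F_{0}$ of transcendence degree one over $\bar F$, so it cannot follow from a rank computation alone. This is precisely what the paper imports from Serre's exercise and from Arason--Baeza, and your fallback of ``invoking the standard bound on Kato's cohomology for fields containing an algebraically closed subfield'' is the correct move --- but it is the whole content of the step, not an alternative to a differential-forms argument that works on its own. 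With that citation in place the proof is complete and coincides with the paper's.
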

\begin{proof}
Since essential dimension goes down with field extensions, we may replace $F$ by $\bar{F}$. Let $K/F$ be a field extension such that there exists $t\in \cT(K)$ with $q(K)(t)\neq 0$ in $H(K)$. Assume that $\ed(\cT)\leq n$. Then, there exist an intermediate field $E$ of $K/F$ and $t'\in \cT(E)$ such that $\td_{F}E\leq n$ and $t=t'_{K}$. Therefore, by \cite[Chap. II, \S 4, Exercises 3)(a)]{Serre} (see also \cite[Proposition]{AB}), we have $H(E)=0$. This leads to a contradiction by the following commutative diagram
\[
\begin{CD}
\cT(E)  @>q(E)>> H(E)     \\
@V{}VV    @VV{} V     \\
\cT(K)  @>q(K)>> H(K)     
\end{CD}
\]
\end{proof}

\begin{example}
Let $F$ be a field of characteristic $2$. Consider the functor $P_{n}:\cat{Fields}/F\to \cat{Sets}$ which assigns to a field extension $K/F$ the set of $K$-isomorphism classes of $n$-fold quadratic Pfister forms 
\[\langle\langle a_{1}, \ldots, a_{n-1}, a_{n}]]:=\langle 1, a_{1}\rangle\tens \cdots \tens\langle 1, a_{n-1}\rangle\tens [1, a_{n}].\]
Consider the cohomological invariant $q:P_{n}\to H$ of degree $n$ defined by
\[\langle\langle a_{1}, \ldots, a_{n-1}, a_{n}]]\mapsto a_{n}\frac{da_{1}}{a_{1}}\wedge \ldots \wedge \frac{da_{n-1}}{a_{n-1}}.\] We show that $q$ is nontrivial. Let $L=\bar{F}(x_{1}, \ldots, x_{n})$ with independent variables $x_{1},\ldots, x_{n}$. Then, the form $\phi:=\langle\langle x_{1}, \ldots, x_{n-1}, x_{n}]]$ is anisotropic over $L$. Hence, by \cite[Theorem (2)]{Kato2}, $q(L)(\phi)$ is nonzero in $H(L)$. By Lemma \ref{basiclemma}, we have $\ed(P_{n})\geq n$. Thus, $\ed(P_{n})=n$.

\end{example}

\begin{example}
As $G_{2}$-$\torsor$ is bijective to $P_{3}$, we have $\ed(G_{2})=3$ over a field of characteristic $2$.
\end{example}

\section{A lower bound of $\ed(\gPGL_{4})$ in characteristic $2$}

Let $F$ be a field of characteristic $2$. For any central simple $F$-algebra $A$ of degree $n$, there exists a quadratic form $\phi_{A}:A\to F$ given by $\phi_{A}(a)=s_{2}(a)$, the second coefficient of the reduced characteristic polynomial of $A$ \[\operatorname{Prd}_{A,a}(x)=x^{n}-s_{1}(a)x^{n-1}+s_{2}(a)x^{n-2}-\cdots +(-1)^{n}s_{n}(a).\]

In \cite[Th\'eor\`eme 1]{Tignol} Tignol showed that for any central simple $F$-algebra $A$ of degree $4$ there exist $2$-fold Pfister form $\phi_{2, A}$ and $4$-fold Pfister form $\phi_{4, A}$ such that
\begin{equation}\label{q2q4}
\phi_{A}=[1, 1]+ \phi_{2, A}+ \phi_{4, A}\in W_{q}(F).
\end{equation}
Moreover, $\phi_{2, A}$ is a norm form of $Q$ such that $\big[A\tens A\big]=\big[Q\big]$ in $\Br(F)$ (i.e., $\phi_{2, A}=\phi_{Q}$) and $\phi_{4, A}=\langle\langle a_{1}, a_{2}\rangle\rangle\tens \phi_{2, A}$ for some $a_{1}, a_{2}\in F^{\times}$.

By a theorem of Kato \cite[Theorem (2)]{Kato2}, we have an isomorphism
\[e_{n+1}: I^{n+1}_{q}(K)/I^{n+2}_{q}(K)\to H^{n+1}_{2}(K)\]
for any field extension $K/F$. Therefore, the quadratic form $\phi_{2, A}$ (resp. $\phi_{4, A}$) in (\ref{q2q4}) defines a cohomological invariant of degree $2$ (resp. $4$) associated to $\gPGL_{4}$-$\torsor$:
\begin{equation}\label{pglfour}
q: \gPGL_{4}\!\text{-}\!\torsor\to H \text{ given by }A\mapsto e_{2}(\phi_{2, A}) \text{ (resp. }e_{4}(\phi_{4, A})).
\end{equation}
 
We recall parametrizations of central simple $F$-algebras of degree $2$ and $4$. For a field extension $K/F$, $a\in K^{\times}$, and $b\in K$, we write $Q:=\big(a, b\big]_{K}$ for the quaternion algebra $K\oplus Ki\oplus Kj\oplus Kji$ with the multiplication $i^{2}=a$, $j^{2}+j=b$, and $ji=ij+i$. By a simple calculation, we have $\phi_{Q}=\langle\langle a, b]]$.

For $a\in K$, we write $K_{a}$ for the quadratic \'etale $K$-algebra $K[t]/(t^{2}+t+a)$. Let $L$ be a biquadratic \'etale $K$-algebra which is generated by two elements $\alpha$ and $\beta$ subject to the relations $\alpha^{2}+\alpha=a$, $\beta^{2}+\beta=b$, $\alpha\beta=\beta\alpha$ for some $a, b\in K$. We shall denote such an algebra by $K_{a, b}$. The automorphism group of $K_{a, b}/K$ is generated the automorphisms $\sigma$ and $\tau$ with the following relations: \[\sigma(\alpha)=\alpha,\, \sigma(\beta)=\beta+1,\, \tau(\beta)=\beta,\, \tau(\alpha)=\alpha+1.\]

Let $u\in K_{a}^{\times}$, $v\in K_{b}^{\times}$, and $w\in K_{a+b}^{\times}$ satisfying $N_{K_{a}/K}(u)\cdot N_{K_{b}/K}(v)=N_{K_{a+b}/K}(w)$. Consider the algebra $K_{a, b}\oplus K_{a, b}\cdot\lambda\oplus K_{a, b}\cdot\delta\oplus K_{a, b}\cdot\lambda\delta$ over $K$ with the multiplication 
\[\lambda^{2}=u,\, \delta^{2}=v,\, (\lambda\delta)^{2}=w,\, \lambda c=\sigma(c)\lambda,\, \delta c =\tau(c)\delta\]
for all $c\in K_{a, b}$. We write $(K_{a, b}, u, v, w)$ for such an algebra. By a theorem of Albert, every central simple $K$-algebra $A$ of degree $4$ is isomorphic to for some $(K_{a, b}, u, v, w)$.

\begin{proposition}
Let $F$ be a field of characteristic $2$. Then, $\ed(\gPGL_{4})\geq 4$.
\end{proposition}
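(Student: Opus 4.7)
The plan is to apply Lemma~\ref{basiclemma} to the degree-$4$ cohomological invariant $q:\gPGL_4\text{-}\torsor \to H$ defined in \eqref{pglfour} by $A \mapsto e_4(\phi_{4,A})$. It suffices to exhibit a field $K/F$ containing $\bar F$ and a central simple $K$-algebra $A$ of degree $4$ with $e_4(\phi_{4,A}) \neq 0$ in $H^4_2(K)$; by Kato's isomorphism $e_4$, this is equivalent to the $4$-fold Pfister form $\phi_{4,A}$ being anisotropic over $K$.

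After replacing $F$ by $\bar F$, I would take $K = F(x_1, x_2, x_3, x_4)$ and construct $A$ explicitly via Albert's parametrization $A = (K_{a,b}, u, v, w)$ recalled above, aiming to make $\phi_{4,A}$ match a generic $4$-fold Pfister form. A natural attempt is to set $a = x_1$ and $b = x_2$, so that $K_{a,b}$ is a biquadratic \'etale extension built from two independent variables. I would then choose $u \in K_a^\times$, $v \in K_b^\times$, $w \in K_{a+b}^\times$ with $N_{K_a/K}(u)\cdot N_{K_b/K}(v) = N_{K_{a+b}/K}(w)$ in such a way that $A$ has exponent $4$ and $A\tens_K A$ is Brauer-equivalent to the quaternion algebra $\big(x_3, x_4\big]_K$. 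With such a choice, $\phi_{2,A} = \langle\langle x_3, x_4]]$, and Tignol's theorem gives $\phi_{4,A} = \langle\langle a_1, a_2\rangle\rangle \tens \langle\langle x_3, x_4]]$ for some $a_1, a_2 \in K^\times$.

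Provided $\phi_{4,A}$ can be identified with a $4$-fold Pfister form in four algebraically independent transcendentals over $\bar F$, its anisotropy over $K$ follows exactly as in the example after Lemma~\ref{basiclemma}: its Kato class is represented by a differential form of the shape $y\,\tfrac{dx_1}{x_1}\wedge\tfrac{dx_2}{x_2}\wedge\tfrac{dx_3}{x_3}$, which is nonzero in $H^4_2(K)$ by \cite[Theorem (2)]{Kato2}. Lemma~\ref{basiclemma} then delivers $\ed(\gPGL_4)\geq 4$.

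The main obstacle is identifying Tignol's Pfister factor $\langle\langle a_1, a_2\rangle\rangle$ explicitly for the chosen parameters $(a, b, u, v, w)$: Tignol's theorem only guarantees its existence. Extracting its concrete form requires unpacking the argument in \cite{Tignol} and carrying out a direct computation in $W_q(K)/I^5_q(K)$, showing that $a_1, a_2$ may be taken to be the remaining independent variables $x_1, x_2$ (or at least that they contribute two further algebraically independent parameters), so that the resulting $\phi_{4,A}$ is an anisotropic $4$-fold Pfister form.
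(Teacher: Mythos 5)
Your overall strategy coincides with the paper's: show the degree-$4$ invariant $A\mapsto e_{4}(\phi_{4,A})$ of \eqref{pglfour} is nontrivial by exhibiting an algebra whose $\phi_{4,A}$ is an anisotropic $4$-fold Pfister form, then apply Lemma~\ref{basiclemma} via Kato's theorem. However, the step you flag as ``the main obstacle'' --- pinning down Tignol's factor $\langle\langle a_{1},a_{2}\rangle\rangle$ --- is exactly the substance of the proof, and you leave it unresolved. The paper's device for closing this gap is not a direct unpacking of Tignol's argument for a generic $A$ in four variables, but a twisting trick: start with a generic abelian crossed product $A=(K_{x,y},u,v,w)$ of index and exponent $4$ over $K=\bar{F}(x,y)$ in only \emph{two} variables (its existence rests on Amitsur--Saltman), adjoin two fresh variables $s,t$, and pass to $D=(K_{x,y}(s,t),us,vt,wst)$, which is Brauer equivalent to $A_{K(s,t)}\tens\big(s,x\big]_{K(s,t)}\tens\big(t,y\big]_{K(s,t)}$. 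The point of this choice is that the second trace form visibly satisfies $\phi_{D}(\lambda)=s$ and $\phi_{D}(\delta)=t$ on the crossed-product generators, and then \cite[Lemme]{Tignol} together with \cite[Theorem 2.10]{KKOPS} identifies the slots outright: $\phi_{4,D}=\langle\langle s,t,g^{2}+g+y,x]]$, whose anisotropy over $\bar{F}(x,y,s,t)$ is then immediate from the independence of $s,t$ and the anisotropy of $\phi_{2,D}$. Without some such mechanism your argument does not terminate: Tignol's theorem only asserts that $a_{1},a_{2}$ exist, and for a badly chosen $A$ the form $\phi_{4,A}$ can perfectly well be hyperbolic (it is, for instance, whenever $\exp(A)\leq 2$, since then $\phi_{2,A}$ and hence $\phi_{4,A}=\langle\langle a_{1},a_{2}\rangle\rangle\tens\phi_{2,A}$ are hyperbolic).

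Two further points in your setup need repair. First, you cannot freely prescribe $[A\tens A]=\big[\big(x_{3},x_{4}\big]_{K}\big]$ with $x_{3},x_{4}$ independent of $a=x_{1}$, $b=x_{2}$: for $A=(K_{a,b},u,v,w)$ one has $[A\tens A]=\big[\big(N_{K_{b}/K}(v),a\big]_{K}\big]$, so one slot of $\phi_{2,A}$ is forced to be $a$ itself; the parametrization is more rigid than you assume. Second, you must justify the existence of a degree-$4$ division algebra of exponent $4$ over the relevant rational function field over $\bar{F}$; this is where the generic abelian crossed product construction enters and it should be cited rather than taken for granted.
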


\begin{proof}
We first show that the cohomological invariant of degree $4$ in (\ref{pglfour}) is nontrivial. To do this, we slightly modify a generic abelian crossed product algebra given in \cite[4. Exemple]{Tignol} and \cite{AS}. Let $K=\bar{F}(x, y)$ with independent variables $x, y$. Consider a division algebra $A:=(K_{x, y}, u, v, w)$ over $K$ such that $\ind(A)=\exp(A)=4$ for some $u\in K_{x}^{\times}, v\in K_{y}^{\times}, w\in K_{x, y}^{\times}$. By a simple modification, we may assume that $u=f+\alpha$ and $v=g+\beta$ with nonzero $f, g\in K$.

Let $s$ and  $t$ be independent variables over $K$. Extend the automorphisms $\sigma$ and $\tau$ of $\Gal(K_{x, y}/K)$ to the automorphisms of $\Gal(K_{x, y}(s, t)/K(s, t))$ by fixing $s$ and $t$. Let $B:=(K_{x, y}(s, t), s, t, st)=\big(s, x\big]_{K(s, t)}\tens \big(t, y\big]_{K(s, t)}$ be a biquaternion algebra over $K(s, t)$. Then, $A_{K(s, t)}\tens B$ is Brauer equivalent to a division $K(s, t)$-algebra $D:=(K_{x, y}(s, t), us, vt, wst)$ such that $\ind(D)=\exp(D)=4$. In $\Br(K(s, t))$, we have
\[\big[D\tens D\big]=\big[A_{K(s, t)}\tens A_{K(s, t)}\big]=\big[\big(N_{K_{y}/K}(v), x\big]_{K(s, t)}\big],\]
thus, we obtain an anisotropic form $\phi_{2, D}=\langle\langle N_{K_{y}/K}(v), x]]=\langle\langle g^{2}+g+y, x]]$ over $K(s, t)$ in the decomposition of $\phi_{D}$.

Since $\phi_{D}(\lambda)=s$ and $\phi_{D}(\delta)=t$, it follows from \cite[Lemme]{Tignol} and \cite[Theorem 2.10]{KKOPS} that $\phi_{4, D}=\langle\langle s, t, g^{2}+g+y, x]]$. As $s$ and $t$ are independent variables over $K$ and $\phi_{2, D}$ is anisotropic over $\bar{F}(x, y, s, t)$, the form $\phi_{4, D}$ is anisotropic over $\bar{F}(x, y, s, t)$. Therefore, by \cite[Theorem (2)]{Kato2}, the invariant $q(\bar{F}(x, y, s, t))(\phi_{4, D})$ in (\ref{pglfour}) is nonzero in $H(\bar{F}(x, y, s, t))$. Finally, by Lemma \ref{basiclemma} we obtain $\ed(\gPGL_{4})\geq 4$.\end{proof}

\begin{remark}
As $\ed(\gPGL_{4})\leq 5$, we have $\ed(\gPGL_{4})$ is either $4$ or $5$. When $\operatorname{char}(F)\neq 2$, it was shown by Rost that $\ed(\gPGL_{4})=5$.
\end{remark}

\paragraph{\bf Acknowledgments.} 
This work was partially supported by an internal fund from KAIST, TJ Park Junior Faculty Fellowship of POSCO TJ Park Foundation, and National Research Foundation of Korea (NRF) funded by the Ministry of Science, ICT and Future Planning (2013R1A1A1010171).


\begin{thebibliography}{10}

\bibitem{AS}
S.~A. Amitsur, D.~Saltman, \emph{Generic Abelian crossed products and $p$-algebras}, J.~Algebra \textbf{51} (1978), 76--87.

\bibitem{AB}
J.~K.~Arason and R.~Baeza, \emph{La dimension cohomologique des corps de type {${\bf C}_{\rm r}$} en caract\'eristique {${\bf p}$}}, C. R. Math. Acad. Sci. Paris \textbf{348} (2010), no.~3-4, 125--126.



\bibitem{GMS}
S.~Garibaldi, A.~Merkurjev, and J.-P. Serre, \emph{Cohomological invariants in Galois cohomology}, American Mathematical Society, Providence, RI, (2003).



\bibitem{Kato}
K.~Kato, \emph{Galois cohomology of complete discrete valuation fields}, Lecture Notes in Math. \textbf{67}, Springer-Verlag, Berlin (1982), 215--238.

\bibitem{Kato2}
K.~Kato, \emph{Symmetric bilinear forms, quadratic forms and {M}ilnor {$K$}-theory in characteristic two}, Invent. Math. \textbf{66} (1982), no.~3, 493--510.

\bibitem{KKOPS}
M.~Kneser, M.-A.~Knus, M.~Ojanguren, R.~Parimala, and R.~Sridharan, \emph{Composition of quaternary quadratic forms}, Compositio Math. \textbf{60} (1986), 133--150.

\bibitem{Reich}
Z.~Reichstein, \emph{On the notion of essential dimension for algebraic groups}, Transform. Groups \textbf{5} (2000), no.~3, 265--304.

\bibitem{Serre}
J.-P. Serre, \emph{Galois cohomology}, Springer Monographs in Mathematics, Springer-Verlag, Berlin (2002).


\bibitem{Tignol}
J.~-P.~Tignol, \emph{La forme seconde trace d'une alg\`ebre simple centrale de degr\'e 4 de caract\'eristique 2}, C. R. Math. Acad. Sci. Paris \textbf{342} (2006), no.~2, 89--92.

\end{thebibliography}
\end{document}